\begin{document}

\newcommand{\chp}{\mathds{1}}

\newtheorem{thm}{Theorem}[section]
\newtheorem{theorem}{Theorem}[section]
\newtheorem{lem}[thm]{Lemma}
\newtheorem{lemma}[thm]{Lemma}
\newtheorem{prop}[thm]{Proposition}
\newtheorem{proposition}[thm]{Proposition}
\newtheorem{cor}[thm]{Corollary}
\newtheorem{defn}[thm]{Definition}
\newtheorem*{remark}{Remark}
\newtheorem{conj}[thm]{Conjecture}
\newtheorem{objective}[thm]{Objective}


\newcommand{\Z}{{\mathbb Z}} 
\newcommand{\Q}{{\mathbb Q}}
\newcommand{\R}{{\mathbb R}}
\newcommand{\C}{{\mathbb C}}
\newcommand{\N}{{\mathbb N}}
\newcommand{\FF}{{\mathbb F}}
\newcommand{\fq}{\mathbb{F}_q}
\newcommand{\rmk}[1]{\footnote{{\bf Comment:} #1}}

\newcommand{\Gal}{\mathrm{Gal}}

\renewcommand{\mod}{\;\operatorname{mod}}
\newcommand{\ord}{\operatorname{ord}}
\newcommand{\norm}{\mathcal N} 
\newcommand{\intinf}{\int_{-\infty}^\infty}
\newcommand{\disc}{\operatorname{disc}}
\newcommand{\cond}{\operatorname{cond}} 
\newcommand{\lcm}{\operatorname{lcm}}
\newcommand{\leg}[2]{\left( \frac{#1}{#2} \right)}  
\newcommand{\Li}{\operatorname{Li}}

\newcommand{\sumstar}{\sideset \and^{*} \to \sum}
\newcommand{\prodstar}{{\sideset \and^{*} \to \prod}}

\newcommand{\sumf}{\sum^\flat}
\newcommand{\CF}{c_0} 
\newcommand{\Ht}{\operatorname{Ht}}

\newcommand{\sign}{\operatorname{sign}} 

\newcommand{\divid}{d} 

\newcommand{\pol}{\mathcal P_d}
\newcommand{\Bad}{\operatorname{Bad}}

\newcommand{\pols}{\mathcal P_\gamma(T)}

 \title[A lower bound on the LCM of polynomial sequences]{A lower bound on the least common multiple of polynomial sequences}
 \author{James Maynard and Ze'ev Rudnick}
 \date{\today}
 
 \begin{abstract} 
 For a polynomial $f\in \Z[x]$ with integer coefficients which is irreducible over the rationals of degree $d\geq 2$, Cilleruelo conjectured  that   the least common multiple   of the values of the polynomial at the first $N$ integers satisfies $\log \lcm (f(1),\dots, f(N)) \sim (d-1) N\log N$ as $N\to \infty$. This is only known for degree $d=2$. We give a lower bound for all degrees $d\geq 2$ which is consistent with the conjecture:  $\log \lcm (f(1),\dots, f(N)) \gg N\log N$.  
 \end{abstract}
 
 \address{Mathematical Institute, Radcliffe observatory quarter, Woodstock Road, Oxford OX2 6GG,
England}
\email{james.alexander.maynard@gmail.com}

 \address{Raymond and Beverly Sackler School of Mathematical Sciences,
Tel Aviv University, Tel Aviv 69978, Israel}
\email{rudnick@tauex.tau.ac.il}

  \thanks{This project has received funding from the European Research Council (ERC) under the European Union's Horizon 2020 research and innovation programme 
 (grant agreement ${\rm n}^{o}$ 786758).
}
 \maketitle
 
\section{The LCM problem}
For a polynomial $f\in \Z[X]$ with integer coefficients, set 
$$L_f(N):=\lcm\{f(n):n=1,\dots,N\}.
$$
The goal is to understand the asymptotic growth of $\log L_f(N)$ as $N\to \infty$. 

It is a well known and elementary fact that the least common multiple of all integers $1,2,\dots,N$ is exactly given by 
$$\log \lcm\{1,2,\dots,N\} = \psi(N):=\sum_{n\leq N} \Lambda(n)$$
with $\Lambda(n)$ being the von Mangoldt function, and hence by the Prime Number Theorem, 
$$
\log \lcm\{1,2,\dots,N\} \sim N .
$$
A similar growth occurs for products of linear polynomials \cite{HQT}.  
  
  However, in the case of irreducible polynomials higher degree, 
Cilleruelo \cite{Cilleruelo} conjectured that the growth is faster than linear, precisely: 
\begin{conj}\label{conj cil} 
If $f$ is an irreducible polynomial with $\deg f\geq 2$, then
$$\log L_f(N)\sim (\deg f-1)N\log N,\quad N\to \infty .
$$
\end{conj}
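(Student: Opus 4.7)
The plan is to establish the asymptotic via the exact identity
$$\log L_f(N) = \sum_p M_p(N)\log p, \qquad M_p(N) := \max_{1 \le n \le N} v_p(f(n)),$$
compared against the companion product sum
$$\sum_{n\le N} \log|f(n)| = \sum_p S_p(N)\log p, \qquad S_p(N) := \sum_{n\le N} v_p(f(n)),$$
whose left side equals $dN\log N + O(N)$ by Stirling (with $d = \deg f$). Cilleruelo's conjecture is therefore equivalent to the nonnegative defect estimate
$$\sum_p \bigl(S_p(N) - M_p(N)\bigr)\log p = N\log N + o(N\log N),$$
which I would analyse by splitting primes according to the size of $p$ relative to $N$.

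For $p \le N^{1/2}$ the bound $M_p(N)\log p \le d\log N + O(1)$ together with $\pi(N^{1/2}) \ll N^{1/2}/\log N$ gives $\sum_{p\le N^{1/2}} M_p(N)\log p = O(N)$, which is negligible. Meanwhile, the Prime Ideal Theorem for the splitting field of $f$ combined with partial summation yields
$$\sum_{p\le N^{1/2}} S_p(N)\log p = N\sum_{p\le N^{1/2}}\frac{\rho_f(p)\log p}{p} + O(N) = \tfrac12 N\log N + O(N),$$
so the saving from small primes is exactly $\tfrac12 N\log N + O(N)$. Extending the same Chebotarev-type computation up to $X = N^d$ confirms $\sum_p S_p(N)\log p = dN\log N + O(N)$ from the prime-power side, consistent with Stirling. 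For $p > N^d$ one has $M_p(N)\le 1$ and $\sum_{p > N^d} S_p(N)\log p = O(N)$, so this range contributes negligibly.

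The crux is therefore the range $N^{1/2} < p \le N^d$. Here $M_p(N) = 1$ for essentially every prime $p$ dividing $\prod_{n\le N} f(n)$, because the expected count of $(p,n)$ with $p > N^{1/2}$ and $p^2 \mid f(n)$ is $O(\sqrt N)$, giving a negligible contribution. Writing $\chi_p$ for the indicator that $p\mid \prod_{n\le N} f(n)$, the remaining task reduces to establishing
$$\sum_{N^{1/2} < p \le N^d} \chi_p\log p = (d-1)N\log N + o(N\log N).$$
The trivial upper bound $\chi_p \le S_p(N)$ combined with the previous computation gives $\sum_{N^{1/2}<p\le N^d} \chi_p\log p \le (d-\tfrac12)N\log N + O(N)$, which is off by $\tfrac12 N\log N$. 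Closing this gap amounts to showing that the \emph{waste} $\sum_p \bigl(S_p(N) - 1\bigr)^+ \log p \approx \tfrac12 N\log N$ on the nose, a contribution concentrated near $p \sim N^{1/2}$ where the mean count $N_p := \#\{n\le N : p\mid f(n)\} = \rho_f(p)N/p$ is large, plus smaller contributions from $p \in (N,N^d]$ where $N_p$ is sporadically $\ge 2$.

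The main obstacle, and the reason Cilleruelo's conjecture remains open for $d\ge 3$, is controlling the second moment of $N_p$ across the dyadic ranges in $(N^{1/2},N^d]$: one needs a bound of the form $\sum_{P < p \le 2P}\bigl(N_p - \rho_f(p)N/p\bigr)^2 \ll N$ for each dyadic $P$, which via Cauchy--Schwarz controls the contribution of atypical primes. For $d=2$ this is accessible through the theory of binary quadratic forms and the Duke--Friedlander--Iwaniec theorem on equidistribution of roots of quadratic polynomials modulo primes; for $d\ge 3$ no such unconditional tool exists. I would attempt a large-sieve approach combined with Deligne-style estimates for exponential sums over the roots of $f \pmod p$, averaging over $p$ to extract cancellation. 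Pinning down the sharp constant $d-1$ rather than a weaker lower bound, however, appears to require a genuinely new input on the distribution of roots of $f \pmod p$ in short intervals $[1,N]$, which is beyond current technology.
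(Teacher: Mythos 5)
You have been asked about Conjecture~\ref{conj cil}, which the paper does not prove: it is stated as an open conjecture, known only in degree $d=2$ by Cilleruelo's work, and the paper's actual contribution (Theorem~\ref{prop:lower bound}) is the far weaker order-of-magnitude bound $\log L_f(N)\gg N\log N$, obtained by an entirely different and much cruder route (a positive proportion of $n$ have $P^+(f(n))>n$, each such prime $p>N/\log N$ is the top prime factor for at most $d$ values of $n$, hence $\gg N/d$ distinct primes of size $\geq N/\log N$ divide the lcm). Your write-up is, by its own admission, not a proof. The reduction you set up is the correct and standard one: comparing $\sum_p M_p(N)\log p$ with $\sum_p S_p(N)\log p = dN\log N+O(N)$, handling $p\le N^{1/2}$ by the trivial bound on $M_p$ and the Chebotarev/Nagell estimate $\sum_{p\le x}\rho_f(p)\log p/(p-1)=\log x+O(1)$, and discarding $p>N^d$. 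But the decisive step — showing that the primes in $(N^{1/2},N^d]$ dividing $\prod_{n\le N}f(n)$ contribute exactly $(d-1)N\log N+o(N\log N)$, equivalently that the ``waste'' $\sum_p(S_p(N)-M_p(N))\log p$ is $N\log N+o(N\log N)$ rather than merely $\geq \tfrac12 N\log N+O(N)$ — is precisely the content of the conjecture, and your final paragraph explicitly defers it to second-moment bounds on $N_p$ and equidistribution inputs that you acknowledge are ``beyond current technology'' for $d\ge 3$. Identifying the obstruction is not the same as overcoming it; nothing in the proposal closes the $\tfrac12 N\log N$ gap.

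There is also a secondary unjustified step: your claim that the pairs $(p,n)$ with $p>N^{1/2}$ and $p^2\mid f(n)$ number $O(\sqrt N)$ is a heuristic, not a theorem. Summing $\rho_f(p^2)\bigl(N/p^2+O(1)\bigr)$ over $N^{1/2}<p\le N^{d/2}$ leaves an accumulated error of order $\pi(N^{d/2})$, and controlling it is essentially the square-divisor (squarefree values) problem for $f$, itself open unconditionally for $d\ge 4$. If your goal is a publishable result rather than the full conjecture, the paper's strategy is the instructive contrast: by settling for a constant times $N\log N$ one can avoid all distributional questions about roots of $f$ modulo $p$ and use only Lemma~\ref{lem:root bound} together with Theorem~\ref{lem:Nagel}, whose proof needs nothing beyond the first-moment estimate $\sum_{p\le N}\rho_f(p)\log p/(p-1)=\log N+O(1)$ that you already invoke.
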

 Cilleruelo proved Conjecture~\ref{conj cil} for quadratic polynomials. 
No other case of Conjecture~\ref{conj cil} is known to date.   
\begin{remark}
An examination of Cilleruelo's argument shows that for any irreducible $f$ of degree $d\geq 3$, we have an upper bound
\[
\log L_f(N)\lesssim (d -1) N\log N .  
\]     
Here $f\lesssim g$ means that $|f(x)|\le (1+o(1))g(x)$.
 \end{remark}
In this note, we give a lower bound of the right order of magnitude: 

\begin{thm}\label{prop:lower bound}
Let $f\in \Z[x]$ be irreducible,  of degree $d\geq 2$. Then  
\[
\log L_f(N) \gg N\log N .
\]
\end{thm}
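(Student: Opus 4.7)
My plan is to bound $\log L_f(N)$ from below by comparing it with the total logarithmic product $\sum_{n \le N} \log|f(n)|$, which equals $dN\log N + O(N)$. For each prime $p$ set
\[
V_p(N) := \sum_{n \le N} v_p(f(n)), \qquad e_p(N) := \max_{n \le N} v_p(f(n)),
\]
so that $\sum_{n \le N}\log|f(n)| = \sum_p V_p(N)\log p$ while $\log L_f(N) = \sum_p e_p(N)\log p$. I would split both sums at the threshold $p = N$.

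For the small primes $p \le N$, a direct residue-class count (using Hensel's lemma at primes coprime to $\disc f$) gives $V_p(N) = \rho_f(p) N/(p-1) + O_f(\log N/\log p)$, where $\rho_f(p)$ is the number of roots of $f$ modulo $p$. Since $f$ is irreducible, the Galois group of its splitting field acts transitively on the roots of $f$, so Landau's prime ideal theorem for the number field $\Q[x]/(f)$ gives $\sum_{p \le N} \rho_f(p)\log p \sim N$. Partial summation then yields
\[
\sum_{p \le N} V_p(N)\log p = N\log N + O_f(N),
\]
with the finitely many ramified primes absorbed into the error.

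The crux is the treatment of large primes. If $p > N$ then the integers $1,\ldots,N$ occupy distinct residue classes modulo $p$, so $\#\{n \le N : p \mid f(n)\} \le \rho_f(p) \le d$, which immediately gives the pigeonhole inequality $V_p(N) \le d\, e_p(N)$. Summing over $p > N$ yields
\[
\sum_{p > N} V_p(N)\log p \le d \log L_f(N).
\]
Combining this with the identity $\sum_{n \le N}\log|f(n)| = dN\log N + O(N)$ and the small-prime asymptotic above produces
\[
\log L_f(N) \ge \frac{d-1}{d}\, N\log N \,(1+o(1)),
\]
which is stronger than the claimed bound.

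The main technical step is the small-prime asymptotic, which rests on the (unconditional) prime ideal theorem for $\Q[x]/(f)$, together with the careful accounting of ramified primes and of the tail contribution from $p^k > N$. The remaining gap between the constant $(d-1)/d$ obtained here and Cilleruelo's conjectured $d-1$ is due entirely to the pigeonhole bound $V_p(N) \le d\, e_p(N)$, which overcounts by a factor of $d$; closing it would require the much more delicate statement that for almost all large $p$ dividing some $f(n)$, the prime divides just a single such value, which is the real source of difficulty in Cilleruelo's conjecture.
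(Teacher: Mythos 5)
Your proposal is correct, but it takes a genuinely different route from the paper, and in fact lands on a stronger constant. The paper first proves the Chebyshev-type statement (Theorem~\ref{lem:Nagel}) that $P^+(f(n))>n$ for a proportion $\geq 1-\tfrac1d$ of $n\le N$, and then lower-bounds $\log L_f(N)$ by counting only the \emph{distinct} largest prime factors $P^+(f(n))$ for $n\in[N/\log N,N]$, each weighted by $\log(N/\log N)$ and each occurring at most $d$ times (Lemma~\ref{lem:root bound}); this discards the multiplicity information and yields a constant of order $1/d$. You instead keep the full $p$-adic mass at large primes: from $\sum_p V_p(N)\log p=\sum_{n\le N}\log|f(n)|=dN\log N+O(N)$ and the small-prime estimate $\sum_{p\le N}V_p(N)\log p=N\log N+O(N)$ you get $\sum_{p>N}V_p(N)\log p=(d-1+o(1))N\log N$, and the same root-counting observation as Lemma~\ref{lem:root bound} (for $p>N$ the integers $n\le N$ lie in distinct classes mod $p$, so at most $d$ of them have $p\mid f(n)$) gives the pigeonhole bound $V_p(N)\le d\,e_p(N)$, hence $\log L_f(N)\ge \tfrac{d-1}{d}(1+o(1))N\log N$, better than the $\tfrac1d N\log N$ recorded in the paper's remark. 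The analytic inputs are identical: your asymptotic for $V_p(N)$ at $p\le N$ is exactly Lemma~\ref{lem:alpha bounds} combined with $\sum_{p\le N}\rho_f(p)\log p/(p-1)=\log N+O(1)$ (Landau/Chebotarev, as in Nagell). The one place where "absorbed into the error" glosses over a real (if classical) point is the ramified primes $p\mid\disc f$: a priori a single such prime could contribute as much as $\sum_{n\le N}v_p(f(n))\log p\ll dN\log N$, so you genuinely need a bound like Nagell's $\rho_f(p^k)=O_f(1)$ uniformly in $k$ (the paper cites his Th\'eor\`eme II) to reduce this to $O_f(N)$ in total; with that citation your argument is complete. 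What the paper's longer route buys is the intermediate result of independent interest on $P^+(f(n))$ (and its bootstrapped corollary $P^+(f(n))>\delta n\log n$); what your route buys is a better constant with less machinery, and it correctly isolates that the only remaining loss against Cilleruelo's conjecture is the factor $d$ in the pigeonhole at large primes.
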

\begin{remark}
The argument gives that $\log L_f(N) \gtrsim \frac {1}{d}N\log N$.
\end{remark}

\begin{cor}
Suppose $f\in \Z[x]$ has an irreducible factor of degree $\geq 2$, i.e. $f(x)$  is not a product of linear polynomials (over $\Q$). Then 
\[
 N\log N\ll  \log L_f(N)\ll N\log N.
 \] 
\end{cor}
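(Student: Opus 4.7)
The plan is to reduce the statement to the irreducible factors of $f$ and apply Theorem~\ref{prop:lower bound} together with the upper bound from the remark. By Gauss's lemma, I would write
\[ f(x) = c\, g_1(x) \cdots g_k(x) \]
with $c \in \Z$ and each $g_i \in \Z[x]$ primitive and irreducible over $\Q$; by hypothesis some $g_j$ has $\deg g_j \geq 2$.

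For the lower bound, $g_j(n) \mid f(n)$ for every $n$, so any common multiple of $\{f(1),\dots,f(N)\}$ is also a common multiple of $\{g_j(1),\dots,g_j(N)\}$. Hence $L_{g_j}(N) \mid L_f(N)$, and Theorem~\ref{prop:lower bound} applied to the irreducible polynomial $g_j$ of degree $\geq 2$ yields
\[ \log L_f(N) \geq \log L_{g_j}(N) \gg N\log N. \]

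For the upper bound, the identity $f(n) = c\prod_i g_i(n)$ gives $f(n) \mid c\prod_i L_{g_i}(N)$ for every $n \leq N$, so $L_f(N) \mid c\prod_i L_{g_i}(N)$ and
\[ \log L_f(N) \leq \log|c| + \sum_i \log L_{g_i}(N). \]
For each linear factor $g_i$, the values $g_i(n)$ with $n \leq N$ have absolute value $O(N)$, hence $L_{g_i}(N)$ divides $\lcm(1,2,\dots,O(N))$ and the prime number theorem gives $\log L_{g_i}(N) \ll N$ (this is also the content of \cite{HQT} in the linear case). For each factor $g_i$ of degree $\geq 2$, the remark following Conjecture~\ref{conj cil} gives $\log L_{g_i}(N) \lesssim (\deg g_i - 1) N\log N$. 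Summing over the finitely many factors yields $\log L_f(N) \ll N\log N$.

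There is no substantive obstacle here: the argument is essentially bookkeeping. The only mild care needed is in handling the linear factors, which fall outside the scope of both Theorem~\ref{prop:lower bound} and the remark but are absorbed by the classical linear estimate above, and in invoking Gauss's lemma so that divisibility of the integer values $g_i(n) \mid f(n)$ is genuinely at our disposal.
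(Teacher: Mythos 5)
Your proof is correct and follows essentially the same route as the paper, whose one-line justification $\max(\lcm\{a_n\},\lcm\{b_n\})\le \lcm\{a_nb_n\}\le \lcm\{a_n\}\cdot\lcm\{b_n\}$ is exactly your two divisibility relations $L_{g_j}(N)\mid L_f(N)$ (lower bound via Theorem~\ref{prop:lower bound}) and $L_f(N)\mid c\prod_i L_{g_i}(N)$ (upper bound). The only cosmetic difference is that for the upper bound you invoke the remark after Conjecture~\ref{conj cil}, whereas the trivial estimate $L_f(N)\le \prod_{n\le N}|f(n)|\ll_f N^{dN}$ already gives $\log L_f(N)\ll N\log N$ without it.
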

This is because $\max (\lcm\{a_n\} ,\lcm\{b_m\}) \leq \lcm\{a_nb_n\}\leq \lcm\{a_n\} \cdot \lcm\{b_m\} $. 

\bigskip

Prior to this note, the only available bound was of size $\gg N$:  Hong {\em et al} \cite{HLQW} show that $\log L_f(N)\gg N$ for any polynomial with {\em non-negative} integer coefficients.   

\section{Proof of Theorem~\ref{prop:lower bound}}
Let $P^+(n)$ denote the largest prime factor of $n$. We will need  a  result on the greatest prime factor $P^+(f(n))$ of $f(n)$ (``Chebyshev's problem"). This is a well-studied subject, and we need a relatively simple bound, which we state here and explain in \S~\ref{appendix}: 
\begin{thm}
\label{lem:Nagel} 
 Let $f(x)\in \Z[x]$ be irreducible  of degree $d\geq 2$. Then 
\[
P^+(f(n)) > n 
\]
for a positive proportion  of integers $n$.  
\end{thm}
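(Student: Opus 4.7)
The plan is to show that the set $T = \{n \leq N : P^+(f(n)) > n\}$ satisfies $|T| \geq \frac{d-1}{d}N + o(N)$, which is a positive proportion for every $d \geq 2$. For each $n$, factor $|f(n)| = P_n Q_n$ where $P_n = \prod_{p \leq n} p^{v_p(f(n))}$ collects the prime powers with $p \leq n$ and $Q_n$ collects those with $p > n$; then $n \in T$ iff $Q_n > 1$. The strategy is to compare the exact asymptotic $\sum_{n \leq N} \log|f(n)| = dN\log N + O(N)$ (coming from $|f(n)| \asymp n^d$) with a ceiling for $\sum_{n \leq N} \log P_n$; the excess must sit in $\sum_n \log Q_n$, and since each $\log Q_n = O(\log N)$ and $\log Q_n = 0$ off $T$, this forces $|T| \gg N$.

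Concretely, for the upper bound on $\sum_n \log P_n$ I would swap the order of summation, writing
\[
\sum_{n \leq N} \log P_n
= \sum_{p \leq N} \log p \sum_{k \geq 1} \#\{p \leq n \leq N : p^k \mid f(n)\}
\leq \sum_{p \leq N} \log p \sum_{k \geq 1} \rho_f(p^k)\Bigl(\frac{N}{p^k} + 1\Bigr),
\]
where $\rho_f(m) = \#\{n \bmod m : f(n) \equiv 0 \bmod m\}$. By Hensel's lemma, $\rho_f(p^k) = \rho_f(p) \leq d$ for all but the finitely many primes dividing $\disc(f)$, so the contribution from $k \geq 2$ and from the rounding error is $O(N)$, and the $k = 1$ term is $N \sum_{p \leq N} \rho_f(p) \log p/p$. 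The key input now is Landau's prime ideal theorem applied to the number field $K = \Q[x]/(f)$: after discarding prime ideals of residue degree $\geq 2$ (whose contribution is $O(\sqrt{x}\log x)$), it gives $\sum_{p \leq x} \rho_f(p) \log p = x + o(x)$, and partial summation yields $\sum_{p \leq N} \rho_f(p) \log p/p = \log N + O(1)$. Hence $\sum_n \log P_n \leq N\log N + O(N)$, and subtracting from the known $\sum_n \log|f(n)|$ gives $\sum_n \log Q_n \geq (d-1)N\log N + O(N)$. Combined with the uniform bound $\log Q_n \leq \log|f(n)| \leq d \log N + O(1)$ and the vanishing of $\log Q_n$ outside $T$, one obtains $|T| (d\log N + O(1)) \geq (d-1)N\log N + O(N)$, i.e., $|T| \geq \frac{d-1}{d}N + O(N/\log N)$.

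The main obstacle is precisely the prime-counting input $\sum_{p \leq x} \rho_f(p) \log p \sim x$: the trivial bound $\rho_f(p) \leq d$ would only yield $\sum_n \log P_n \leq dN\log N$, matching $\sum_n \log|f(n)|$ and giving no information. The saving comes from the fact that on average $\rho_f(p) = 1$ rather than $d$ — a manifestation of the irreducibility of $f$, equivalent to Landau's theorem for $\Q[x]/(f)$, or to Chebotarev applied to the identity class in $\Gal(f)$. All other ingredients (Hensel's lemma, summation swaps, Mertens-type estimates for the error terms) are routine.
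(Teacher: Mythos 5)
Your argument is correct and is essentially the paper's proof with different bookkeeping: both compare the total mass $\sum_{n\le N}\log|f(n)|\sim dN\log N$ with the mass carried by primes $p\le n$ (bounded by $N\log N+O(N)$ using Hensel's lemma together with the Chebotarev/prime-ideal-theorem input $\sum_{p\le N}\rho_f(p)\log p/p=\log N+O(1)$), and conclude that a proportion $\ge 1-\frac1d+o(1)$ of $n\le N$ satisfy $P^+(f(n))>n$; the paper phrases this by bounding the exceptional set in $(N/\log N,N]$, you by splitting each $|f(n)|$ into its small- and large-prime parts. The only point you leave implicit is the finitely many primes $p\mid\disc f$, where Hensel does not apply and the trivial bound $\rho_f(p^k)\le p^k$ would contribute a term of order $N\log N$; you need the standard bound $\rho_f(p^k)\ll_f 1$ for separable $f$ (the paper quotes Nagell's bound $\rho_f(p^k)\le d(\disc f)^2$) to keep their contribution at $O(N)$.
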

\begin{remark}
In fact one can show $P^+(f(n))>n$ for a proportion at least $1-\frac 1d$ of integers $n$.
\end{remark}

A result of this form goes back to T. Nagell in 1921 \cite{Nagell 1921}, though he did not state this with positive density, but instead with a better bound of $n(\log n)^a$ for all $a<1$. Once one gets a positive density, one automatically obtains a better bound of $n(\log n)$, again in a set of positive density, see \S~\ref{appendix}. 
A form of Theorem~\ref{lem:Nagel}  was given by Cassels \cite{Cassels} in 1960. The problem was studied by Erdos \cite{Erdos} in 1952, and in 1990 Tenenbaum \cite{Tenenbaum} showed that $P^+(f(n))>n\exp((\log n)^a)$ infinitely often for all $a<2-\log 4$. 

Alongside Theorem \ref{lem:Nagel}, we need the following simple lemma. Let 
\[
\mathcal{N}:=\Bigl\{n\in \Bigl[\frac{N}{\log{N}},N\Bigr]:\,P^+(f(n))>n\Bigr\}.
\]
  \begin{lem}\label{lem:root bound}
  Given a prime $p$, and for $N$ sufficiently large in terms of $f$, 
  the number of $m\in \mathcal N$  with $P^+(f(m))=p$ is at most $d$.
  \end{lem}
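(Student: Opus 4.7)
The plan is a short counting argument based on two observations. Suppose $m_1 < m_2 < \cdots < m_k$ are the elements of $\mathcal{N}$ with $P^+(f(m_j)) = p$; I want to conclude $k \le d$.

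First, since $p \mid f(m_j)$ for each $j$, each $m_j$ reduces to a root of $f$ modulo $p$. The number of distinct roots of $f$ in $\FF_p$ is at most the degree of the reduction $f \bmod p$, which equals $d$ as soon as $p$ does not divide the leading coefficient of $f$. By definition of $\mathcal{N}$ we have $p = P^+(f(m_j)) > m_j \ge N/\log N$, so for $N$ large enough in terms of $f$ the prime $p$ exceeds the leading coefficient; hence $f$ has at most $d$ roots modulo $p$.

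Second, I would show that the $m_j$ themselves lie in pairwise distinct residue classes modulo $p$. Fix any $j$: since $p > m_j$ (this is the very condition defining $\mathcal{N}$), for any $i < j$ we have
\[
0 < m_j - m_i < m_j < p,
\]
so $m_i \not\equiv m_j \pmod p$. Thus the $k$ values $m_1, \dots, m_k$ give $k$ pairwise distinct residue classes modulo $p$, each of which is a root of $f \bmod p$, and by the first observation $k \le d$.

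I do not anticipate any real obstacle: the condition $P^+(f(m)) > m$ built into $\mathcal{N}$ is precisely what forces the relevant large prime divisor to exceed all the $m_j$'s, which is exactly what prevents two distinct $m_j$'s from collapsing to the same residue class modulo $p$.
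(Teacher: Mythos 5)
Your argument is correct and is essentially the paper's own proof: note that $p$ divides $f(m)$ so $m$ is a root of $f$ modulo $p$, observe that $p>N/\log N$ forces $f\bmod p$ to have degree $d$ (so at most $d$ roots), and use $0<m<p$ to see that distinct $m$'s occupy distinct residue classes. Nothing to add.
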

   \begin{proof}
If $P^+(f(m))=p$ then we must have
\[
f(m)\equiv 0\pmod{p}.
\]
If $m\in\mathcal{N}$ and $P^+(f(m))=p$ we must also have that $N/\log{N}\le m<p$. Since $p>N/\log{N}$ and $N$ is sufficiently large in terms of $f$, we see that $f$ is a non-zero polynomial modulo $p$. Therefore $f$ has at most $d$ roots modulo $p$, and all choices of $m$ must be congruent to one of these roots. Since we only consider $0<m<p$, there is at most one choice of $m\equiv a\pmod{p}$ for each root $a$ modulo $p$, and so at most $d$ choices of $m$.
\end{proof}

\subsection{Proof of Theorem~\ref{prop:lower bound}}
  Given Theorem~\ref{lem:Nagel}, we proceed as follows. The result is trivial for bounded $N$, so we may assume that $N$ is sufficiently large in terms of $f$. 
 By Theorem~\ref{lem:Nagel}, there is an absolute constant $c>0$ such that $P_n>n$ for  $\gtrsim c N$ integers in $[1,N]$, and so certainly $\#\mathcal{N}\gtrsim c N$. Let
\[
\mathcal{P}:=\{P^+(f(n)):\,n\in\mathcal{N}\}
\]
be the set of largest prime factors occurring. Then, by Lemma \ref{lem:root bound}, we have that
\[
c N\lesssim \#\mathcal{N}=\sum_{p}\#\{n\in\mathcal{N}:\,P^+(f(n))=p\}\le d\#\mathcal{P},
\]
and so
\[
\#\mathcal{P}\gtrsim \frac{c N}{d}.
\]
Moreover, by definition of $\mathcal{N}$, if $p\in\mathcal{P}$ then $p>N/\log{N}$ and $p|f(n)$ for some $n\le N$. Therefore we have that
  \[
  \begin{split}
  \log \lcm (f(1),\dots, f(N))  &\geq \sum_{p\in \mathcal{P} }  \log p \geq \#\mathcal{P} \log\frac{N}{\log{N}}
  \\
  &
  \gtrsim \frac {c N}{d }\log N,
  \end{split}
  \]
  as claimed. \qed
  
  \section{Proof of Theorem~\ref{lem:Nagel} } \label{appendix}
  We begin by recording a simple bound on the number of times a prime $p$ can divide values of $f$. Let $\alpha_p(N)$ be the exponents in the prime factorization
  \[
  \prod_{n=1}^N|f(n)|=\prod_p p^{\alpha_p(N)}.
  \]
  We then have the following result.
 \begin{lem}\label{lem:alpha bounds}
Let $\rho_f(m)$ denote the number of roots of $f$ modulo $m$. Assume that $f$ has no rational zeros. Let $p$ be a prime, $p\leq N$. 

Then if $p\nmid \disc f$, we have 
 \begin{equation}
\alpha_p(N) = N\frac{\rho_f(p)}{p-1} + O(\frac{\log N}{\log p}) 
 \end{equation}
and if $p\mid \disc f$, we have
\[
\alpha_p(N)\ll \frac{N}{p} . 
\]
\end{lem}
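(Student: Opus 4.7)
The plan is to begin with the standard identity
\[
\alpha_p(N) = \sum_{n=1}^N v_p(f(n)) = \sum_{k\ge 1}\#\{n\le N: p^k\mid f(n)\},
\]
where $v_p$ is the $p$-adic valuation. Because $f$ has no rational zeros, $f(n)\neq 0$ and $|f(n)|\ll_f N^d$, so $v_p(f(n))\le d\log N/\log p + O(1)$; the $k$-sum therefore truncates at $K\asymp \log N/\log p$. Counting by residues modulo $p^k$ gives $\#\{n\le N: p^k\mid f(n)\}=\rho_f(p^k)N/p^k+O(\rho_f(p^k))$, which produces the master expansion
\[
\alpha_p(N) = N\sum_{k=1}^K\frac{\rho_f(p^k)}{p^k} + O\Bigl(\sum_{k=1}^K\rho_f(p^k)\Bigr).
\]

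For the first assertion, when $p\nmid\disc f$ every root of $f\bmod p$ is simple, so Hensel's lemma lifts each to a unique root modulo $p^k$; therefore $\rho_f(p^k)=\rho_f(p)\le d$ for every $k\ge 1$. The main sum is then a geometric series totalling $\rho_f(p)/(p-1)+O(p^{-K})$, whose tail contributes $\ll N^{1-d}=o(1)$ after multiplying by $N$; the error sum is $\ll dK\ll \log N/\log p$. Combining yields the claimed asymptotic.

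For the second assertion, observe that only finitely many primes divide $\disc f$, so $p\ll_f 1$. The key input is that for any fixed $p$ and separable $f$ with no rational root, the sequence $\rho_f(p^k)$ is \emph{uniformly bounded} by a constant $C_f(p)$: this is a classical consequence of Hensel's lemma applied with respect to the $p$-adic valuation of $f'$ at each root of $f$ in $\Z_p$, since only finitely many such roots exist, each lifts uniquely beyond a critical level determined by $v_p(\disc f)$, and below that level the number of solutions is controlled by $d\cdot p^{v_p(\disc f)}$. Substituting this uniform bound into the master expansion gives
\[
\alpha_p(N)\ll C_f(p)\cdot\frac{N}{p-1} + C_f(p)\cdot K \ll_f \frac{N}{p} + \log N,
\]
and since $N/p\gg_f N$ dominates $\log N$, we obtain $\alpha_p(N)\ll_f N/p$.

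The main obstacle lies in the second assertion: the unramified case is a transparent geometric-series calculation, but at ramified primes the uniform bound on $\rho_f(p^k)$ requires Hensel's lemma for multiple roots (or, equivalently, a $p$-adic factorisation of $f$ over $\bar{\Q}_p$ that tracks how singular mod-$p$ roots lift to $\Z_p$). This is the only genuinely algebraic ingredient in the argument.
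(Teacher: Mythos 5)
Your proposal is correct and follows essentially the same route as the paper: the same expansion of $\alpha_p(N)$ over prime powers $p^k$ with truncation at $k\ll d\log N/\log p$, the same counting $\rho_f(p^k)\bigl(N/p^k+O(1)\bigr)$, and Hensel's lemma giving $\rho_f(p^k)=\rho_f(p)$ in the unramified case. The only difference is cosmetic: where the paper quotes Nagell's bound $\rho_f(p^k)\le d(\disc f)^2$ for primes dividing the discriminant, you sketch the standard $p$-adic/Hensel argument for the same uniform bound, which is a legitimate substitute.
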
 

\begin{proof}
Since $f$ has no rational zeros, $\prod_{n=1}^Nf(n)\neq 0$ and so $\alpha_p(N)$ is well defined.  By definition, 
\[
\alpha_p(N) = \sum_{n\leq N} \sum_{k\geq 1} \mathbf{1}(p^k\mid f(n))=\sum_{1\leq k\lesssim \frac{d\log N}{\log p} } \#\{n\leq N: f(n)=0\bmod p^k\} . 
\]
To count the number $\#\{n\leq N: f(n)=0\bmod p^k\}$, divide the interval $[1,N]$ into $\lfloor N/p^k \rfloor$ consecutive intervals of length $p^k$, and a remaining interval. On each such interval of length $p^k$, the number of solutions of $f(n)=p^k$ is the total number $\rho_f(p^k)$ of solutions of this congruence. On the remaining interval, the number of solutions is not greater than that. 
Hence
 \[
\alpha_p(N) =\sum_{1\leq k\lesssim \frac{d\log N}{\log p} } \rho_f(p^k) \Bigl(\lfloor \frac{N}{p^k} \rfloor+O(1)\Bigr) . 
\]
By Hensel's lemma,  $\rho_f(p^k)=\rho_f(p)$ for $p\nmid \disc f$.  Hence for $p\nmid \disc f$
 \[
\alpha_p(N) =\sum_{1\leq k\lesssim \frac{d\log N}{\log p} } \rho_f(p)\Bigl (\lfloor \frac{N}{p^k} \rfloor+O(1)\Bigr) 
=\rho_f(p)\Big(\frac{N}{p-1} +O(\frac{\log N}{\log p}) \Big) . 
\]

For primes $p\mid \disc f$ dividing the discriminant of $f$, a more detailed examination gives the bound \cite[Th\'eor\`eme II]{Nagell 1921}
\[
\rho_f(p^k) \leq d (\disc f)^2 =O(1)
\]
which gives for $p\mid \disc f$
\[
\alpha_p(N)\ll_f \sum_{1\leq k\lesssim \frac{d\log N}{\log p} }  \Bigl(\lfloor \frac{N}{p^k} \rfloor+O(1)\Bigr) \ll  \frac Np,
\]
 as claimed. 
 \end{proof} 
  
\begin{proof} 
Let $N_-:= N/\log N$, and define  the   exceptional set $\mathcal E(N)\subseteq (N_-,N] $ by 
\[ 
 \mathcal E(N):=\{ N_-<n\leq N:  P^+(f(n)) \leq n \} .
\]
Let 
\[
Q(N):=\prod_{n\in \mathcal E(N)} |f(n)| . 
\]
We compute $\log Q(N)$ in two ways:

Using $\log |f(n)| \sim d\log n$ as $n\rightarrow \infty$, we have
\[
\log Q(N) = \sum_{ n\in \mathcal E(N)} \log |f(n)|  \sim \sum_{ n\in \mathcal E(N)}   d\log n  .
\]
Since $\log n\sim \log N$  for $n\in \mathcal E(N)\subseteq [N_-,N]$, we have  
\[
 \sum_{ n\in \mathcal E(N)}  d\log n \sim  d\log N \#\mathcal E(N) 
\]
so that 
\begin{equation}\label{lower bound for log Q} 
\log Q(N)\sim  d\log N \#\mathcal E(N)  .
\end{equation}

On the other hand, write the prime power decomposition of $Q(N)$ as
\[
Q(N) =\prod_{ n\in \mathcal E(N)}|f(n)|= \prod_ p  p^{\gamma_p(N)} . 
\]
Since $P^+(f(n))\leq n \leq N$ 
for all $n\in \mathcal E(N)$, only primes $p\leq N$ appear in the product. Thus
\[
\log Q(N) =\sum_{p\leq N} \gamma_p(N) \log p . 
\]
We also have $\gamma_p(N)\leq \alpha_p(N)$ where $\prod_{n=1}^N |f(n)| = \prod_p p^{\alpha_p(N)}$. 
Thus
\[
\log Q(N)\leq \sum_{p\leq N} \alpha_p(N) \log p . 
\]
Therefore, by Lemma \ref{lem:alpha bounds},
 \begin{equation*}
 \begin{split}
 \log Q(N) &\leq \sum_{p\leq N} \alpha_p(N) \log p
 \\ 
& \leq \sum_{p\leq N}\Big(N\frac{\rho_f(p)}{p-1} + O(\frac{\log N}{\log p}) \Big)\log p +O(\sum_{p\mid \disc f}\frac{N\log p}{p})
\\
& = N\sum_{p\leq N} \frac{\rho_f(p)\log p}{p-1} + O\Big(\pi(N) \log N\Big) +O(N) . 
\end{split}
\end{equation*}
Now for $f$ irreducible it follows from the Chebotarev density theorem (or earlier work of Kronecker or Frobenius) that (see \cite[equation (4)]{Nagell 1921}): 
\[
\sum_{p\leq N} \frac{\rho_f(p)\log p}{p-1}=\log N+O(1),
\]
 hence
\[
\log Q(N)   \leq  N\Big(\log N+O(1)\Big) +O(N) \sim N\log N . 
\]
  Comparing with \eqref{lower bound for log Q} gives
\[
  d \log N \#\mathcal E(N) \lesssim \log Q(N) \lesssim  N\log N
  \]
  and hence we obtain
  \[
  \#\mathcal E(N) \lesssim \frac 1d N . 
  \]
  Therefore
  \[
  \#\{n\in [1,N]: P^+(f(n))<n \} \leq N_-  +\#\mathcal E(N) \lesssim N_-+ \frac 1d N\lesssim \frac 1d N , 
  \]
that is the proportion of elements of $[1,N]$ with $P^+(f(n))<n$ is at most $1/d$. 
  \end{proof}
  
 We owe to Andrew Granville the following observation: Theorem~\ref{lem:Nagel} can be boot-strapped to give a slightly better result:
  \begin{cor}
   Let $f(x)\in \Z[x]$ be irreducible  of degree $d\geq 2$. Then for any $\delta<1/d^2$, 
\[
P^+(f(n)) > \delta n \log n   
\]
for a positive proportion  
of the integers.  
  \end{cor}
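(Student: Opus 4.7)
The plan is to bootstrap Theorem~\ref{lem:Nagel} by exploiting Lemma~\ref{lem:root bound}, partitioning the exceptional set according to whether $P^+(f(n))$ exceeds $n$ or not.

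Set $N_- := N/\log N$; the $\le N_-$ integers in $[1,N_-]$ contribute $o(N)$ and can be discarded, so it suffices to bound
\[
\mathcal{E}_\delta(N) := \{N_- < n \le N : P^+(f(n)) \le \delta n\log n\}.
\]
I would partition $\mathcal{E}_\delta = \mathcal{A} \sqcup \mathcal{B}$ via $\mathcal{A} := \{n \in \mathcal{E}_\delta : P^+(f(n)) \le n\}$ and $\mathcal{B} := \{n \in \mathcal{E}_\delta : n < P^+(f(n)) \le \delta n \log n\}$. For $\mathcal{A}$, the proof of Theorem~\ref{lem:Nagel} already gives $\#\mathcal{A} \lesssim N/d$.

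For $\mathcal{B}$, I would recycle the proof of Lemma~\ref{lem:root bound}, just with a slightly wider window of primes. For $n\in\mathcal{B}$ set $p_n := P^+(f(n))$; then $N_- < n < p_n \le \delta N\log N$, so $p_n$ is large enough (once $N$ is large in terms of $f$) that $f$ is a nonzero polynomial of degree $d$ modulo $p_n$, and the constraint $0 < n < p_n$ pins $n$ uniquely within each residue class modulo $p_n$. Hence each prime $p$ arises as $p_n$ for at most $d$ values of $n \in \mathcal{B}$, yielding
\[
\#\mathcal{B} \le d\cdot\pi(\delta N\log N) \sim d\delta N.
\]

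Combining, $\#\mathcal{E}_\delta(N) \lesssim (1/d + d\delta)N$, so the proportion of $n \in [1,N]$ with $P^+(f(n)) > \delta n\log n$ is at least $1 - 1/d - d\delta + o(1)$. This is strictly positive whenever $d\delta < (d-1)/d$, i.e.\ $\delta < (d-1)/d^2$, which covers $\delta < 1/d^2$ for every $d \ge 2$. No step is a serious obstacle: the only point requiring a check is that the argument of Lemma~\ref{lem:root bound} applies verbatim on $\mathcal{B}$, and its hypotheses $m > N_-$ and $m < p$ hold by construction of $\mathcal{B}$.
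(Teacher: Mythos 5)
Your argument is correct, and it reaches the same core counting step as the paper: the set of $n\in(N/\log N,N]$ with $n<P^+(f(n))\le \delta n\log n$ (your $\mathcal{B}$, the paper's $\mathcal N\cap\mathcal S$) is controlled by noting that each prime $p\le \delta N\log N$ can serve as $P^+(f(n))$ for at most $d$ such $n$ (the Lemma~\ref{lem:root bound} argument, which indeed applies verbatim since $N/\log N<n<p$), giving the bound $d\,\pi(\delta N\log N)\sim d\delta N$. Where you diverge is in the bookkeeping: the paper argues by contradiction, assuming the bad set $\mathcal S$ has full density and playing this off against $\#\mathcal N\gtrsim N/d$, which yields the range $\delta<1/d^2$ and (implicitly) a good proportion of about $1/d-d\delta$. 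You instead bound the bad set directly by partitioning it into $\mathcal A$ (where $P^+(f(n))\le n$) and $\mathcal B$, and for $\mathcal A$ you invoke the stronger quantitative output of the proof of Theorem~\ref{lem:Nagel}, namely $\#\mathcal E(N)\lesssim N/d$ (the $1-1/d$ density stated in the paper's remark), rather than just $\#\mathcal N\gtrsim N/d$. This buys you a cleaner, direct (non-contradiction) estimate, an explicit lower bound $1-1/d-d\delta-o(1)$ on the proportion, and in fact the wider admissible range $\delta<(d-1)/d^2$, which coincides with $1/d^2$ only at $d=2$ and is strictly better for $d\ge 3$; in particular it comfortably covers the stated claim $\delta<1/d^2$.
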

   
  \begin{proof}
Let $\delta>0$ be fixed, and let
\[
\mathcal S:=\Bigl\{n\in \Bigl[\frac{N}{\log{N}},N\Bigr]:\,P^+(f(n))<\delta n\log n\} .
\]
 Assume by contradiction that $\mathcal S$ has full density, that is $\#\mathcal S\sim N$ as $N\rightarrow \infty$. As before, let 
    \[
    \mathcal N := \{\frac N{\log N}<n\leq N: P^+(f(n))>n\}.
     \] 
 We saw that $\#\mathcal{N}\gtrsim \frac{1}{d} N$. Since $\#\mathcal S\sim N$ has density one by assumption we see that $\#\mathcal N\cap \mathcal S\gtrsim \frac 1d N$.  Let
     \[
     \mathcal P_\mathcal{S}:=\{ P^+(f(n)): n\in \mathcal S\cap \mathcal N\}
     \]
     be the set of largest prime divisors arising from $n\in \mathcal N\cap \mathcal S$. 
     Then we saw that each prime $p\in \mathcal P_N$ can occur at most $d$ times as some $P^+(f(m))$ for $m\in \mathcal N$, and so 
     \[
     \#\mathcal P_\mathcal{S}\geq \frac 1{d} \#\mathcal N\cap \mathcal S \gtrsim \frac 1{d^2} N . 
     \]

     On the other hand, since $P^+(f(n))<\delta n\log n $ for $n\in \mathcal S\cap \mathcal N$, we must have 
$     \mathcal P_\mathcal{S}\subseteq [1,\delta N\log N]$. 
     Therefore 
     \[
     \#\mathcal P_\mathcal{S}\leq \pi(\delta N\log N) \sim \delta N
     \]
     by the Prime Number Theorem. Thus 
     \[
     \frac 1{d^2} N \lesssim \#\mathcal P_\mathcal{S} \lesssim \delta N
     \]
     which is a contradiction if $\delta<1/d^2$. 
  \end{proof}

 \end{document}